\numberwithin{equation}{section} 
\numberwithin{figure}{section} 
\theoremstyle{plain}
\newtheorem{thm}{Theorem}[section]
  \theoremstyle{definition}
  \newtheorem{defn}[thm]{Definition}
  \theoremstyle{plain}
  \newtheorem{prop}[thm]{Proposition}
  \theoremstyle{plain}
  \newtheorem{lem}[thm]{Lemma}
  \theoremstyle{remark}
  \newtheorem{rem}[thm]{Remark}
\theoremstyle{remark}
\newtheorem*{acknowledgment*}{Acknowledgment}
\begin{document}

\title{Hausdorff Measures and KMS States}

\author{Marius Ionescu}

\address{Department of Mathematics,  Colgate University, 13 Oak Dr,
  Hamilton, NY 13346, USA}

\email{mionescu@colgate.edu}
\thanks{This work was partially supported by a grant from the Simons Foundation (\#209277 to Marius Ionescu).}

\author{Alex Kumjian}

\address{Department of Mathematics, University of Nevada Reno NV 89557 USA}

\email{alex@unr.edu}

\begin{abstract}
Given a compact metric space $X$ and a local homeomorphism $T:X\to X$
satisfying a local scaling property, we show that the Hausdorff measure
on $X$ gives rise to a KMS state on the $C^{*}$-algebra naturally
associated to the pair $(X,T)$ such that the inverse temperature
coincides with the Hausdorff dimension. We prove that the KMS state
is unique under some mild hypotheses. We use our results to describe
KMS states on Cuntz algebras, graph algebras, and certain $C^{*}$-algebras
associated to fractafolds.
\end{abstract}
\maketitle

\section{Introduction}

\global\long\def\Aut{\operatorname{Aut}}
This note arises from a simple observation which relates to the computation
of the Hausdorff dimension $\dim X$ of a self-similar space $X$ arising from an iterated function
system 
that satsifies the open set condition and has similarity ratios $r_{1},\dots,r_{n}\in(0,1)$;
$\dim X$ is the unique positive number $s$ satisfying\[
1=r_{1}^{s}+\dots+r_{n}^{s}\]
(see \cite[Section 6.4]{Edg_MTFG2}). But this agrees with the condition that ensures
that a certain one-parameter automorphism group on the Cuntz algebra 
 $\mathcal{O}_{n}$ has a KMS state. Let $\alpha:\mathbb{R}\to\Aut(\mathcal{O}_{n})$
be defined such that $\alpha_{t}(S_{j})=e^{it\lambda_{j}}S_{j}$ for
$j=1,2,\dots,n$; then by \cite[Proposition 2.2]{Eva_80} there is
a KMS state for $\alpha$ at inverse temperature $\beta$ iff\[
1=e^{-\beta\lambda_{1}}+\dots+e^{-\beta\lambda_{n}}.\]
If we set $r_{j}=e^{-\lambda_{j}}$ for $j=1,\dots,n$ and $s=\beta$,
the two conditions coincide (see Example \ref{sub:Cuntz-algebras}
for more details). It is the purpose of this note to account for this
coincidence and explore more examples. To accomplish this we apply
groupoid methods  from \cite{Ren_LNM793} and
\cite{Ku_Re_PAMS06} (see also \cite{Ren_2009}). 

In 1967 Haag, Hugenholtz and Winnink (see \cite{HaHuWi_CMP67}) discovered the 
relevance of  the KMS condition to the $C^*$-algebraic formulation of equilibrium states 
in quantum statistical mechanics.  Thereafter the KMS states of a $C^*$-algebra with 
respect to a natural one-parameter automorphism group (regarded as time evolution) 
have played a key role in the development of the theory.  Many of the basic facts 
may be found in the text by Bratteli and Robinson (see \cite{Br_Ro_OAQS2}).

Given a compact metric space $X$, a local homeomorphism $T:X\to X$ is said to satisfy
the local scaling condition (see Definition \ref{def:loc-scaling})
if $(x, y) \mapsto \frac{\rho(Tx,Ty)}{\rho(x,y)}$ extends to a continuous
function on $X \times X$ that is strictly positive on the diagonal.
In this case, there is a continuous function  
$\varphi\in C(X,\mathbb{R})$  such that for all $x\in X$,
\[
e^{\varphi(x)} = \lim_{y\to x}\frac{\rho(Tx,Ty)}{\rho(x,y)}.
\]
Then $\varphi$ gives rise to a continuous real-valued cocycle $c_\varphi$ on the 
Renault-Deaconu groupoid $G$ which in turn defines a one-parameter 
action $\alpha$ on $C^{*}(G)$, the associated groupoid $C^{*}$-algebra.

Our main result (see Theorem \ref{thm:Main}) asserts that if $T:X\to X$ satisfies the local scaling condition, 
then the Hausdorff measure may be used to define an $(\alpha, \beta)$-KMS state 
on $C^{*}(G)$ where $\beta$ is the Hausdorff dimension of $X$.    We next obtain conditions for the uniqueness of the KMS state (see Proposition \ref{pro:uniqueness}). 
If $\varphi$ is constant, we derive a simple equation involving the topological 
entropy of $T$, the Hausdorff dimension of $X$ and the scaling constant 
(see Proposition \ref{pro:Entropy}).

The remainder of the paper is devoted to applications of our results to a number 
of examples.  The last example, which is based on the Sierpinski octafold, is perhaps 
the most interesting. Although it does not satisfy the hypotheses of our main result, 
the conclusions hold.  This suggests that it should be possible to weaken the hypotheses in our main result.

\section{Preliminaries}\label{sec:prelim}
Let $(X,\rho)$ be a metric space and let $s>0$. For a set $F\subset X$
and $\varepsilon>0$, a countable cover $\mathcal{A}$ of $F$ is
called an $\varepsilon$-\emph{cover} of $F$ iff \global\long\def\diam{\operatorname{diam}}
 $\diam A\le\varepsilon$ for all $A\in\mathcal{A}$. Define\[
\overline{\mu}_{\varepsilon}^{s}(F)=\inf\sum_{A\in\mathcal{A}}(\diam A)^{s},\]
where the infimum is over all countable $\varepsilon$-covers $\mathcal{A}$
of the set $F$ \cite[Section 6.1]{Edg_MTFG2}. Then $\overline{\mu}_{\varepsilon}^{s}$
is decreasing with respect to $\varepsilon$ and\[
\overline{\mu}^{s}(F)=\lim_{\varepsilon\to0}\overline{\mu}_{\varepsilon}^{s}(F)\]
is a metric outer measure on $X$. Let $\mu^{s}$ be the Borel measure
defined by $\overline{\mu}^{s}$. 

Let $s,t>0$ such that $s<t$. It is well known (\cite[Theorem 6.1.6]{Edg_MTFG2})
that if $\mu^{t}(F)>0$ then $\mu^{s}(F)=\infty$ and if $\mu^{s}(F)<\infty$
then $\mu^{t}(F)=0$. The \emph{Hausdorff dimension }$\dim F$ of
a set $F$ is the unique number $s_{0}\in[0,\infty]$ such that $\mu^{s}(F)=\infty$
for all $s<s_{0}$ and $\mu^{s}(F)=0$ for all $s>s_{0}$. If $s=\dim X$,
then we call $\mu^{s}$ the \emph{Hausdorff measure} on $X$. In the
following we assume that $0<s<\infty$ and $0<\mu^{s}(X)<\infty$.

We say $G$ is a groupoid
  (\cite{Ren_LNM793}) if there is a
subset $G^{(2)}$ of $G\times G$, a map $(x,y)\mapsto xy$ from
$G^{(2)}$ to $G$ and a involution $x\mapsto x^{-1}$ on $G$ such that
the following conditions hold:
\begin{enumerate}
\item If $(x,y)$ and $(y,z)$ are in $G^{(2)}$, then so are $(xy,z)$
  and $(x,yz)$, and the equation $(xy)z=x(yz)$ is satisfied;
\item For all $x\in G$, $(x^{-1},x)\in G^{(2)}$ and if $(x,y)\in
  G^{(2)}$, then $x^{-1}(xy)=y$ and $(xy)y^{-1}=x$.
\end{enumerate}
The maps $r$ and $s$ on $G$, defined by the formulae $r(x)=xx^{-1}$
and $s(x)=x^{-1}x$ are called the \emph{range} and \emph{source}
maps. Then $G^{(0)}:=r(G)=s(G)$ is called the \emph{unit space} of
$G$. The groupoid $G$ is called a topological groupoid in case $G$
  is a groupoid with a topology such that the
multiplication and the inverse maps are continuous, where the topology
on $G^{(2)}$ is the relative product topology. The groupoids that we
consider in this note are Hausdorff and locally compact. 

A left \emph{Haar system} (\cite{Ren_LNM793}) on a topological groupoid $G$ is a family
  $\{\lambda^u\}_{u\in G^{(0)}}$ of non-negative Radon measures on $G$
  such that
\begin{enumerate}
\item $\operatorname{supp}(\lambda^u)=G^u$, $u\in G^{(0)}$;
\item for $f\in C_c(G)$, the function
    \[
    u\mapsto \int fd\lambda^u
    \]
    on $G^{(0)}$ is in $C_c(G^{(0)})$ ; and
  \item for $x\in G$, $\int f(xy)d\lambda^{s(x)}(y)=\int f(y)d\lambda^{r(x)}$.
\end{enumerate}
Once a Haar system $\{\lambda^y\}_{u\in G^{(0)}}$ has been
  specified on a groupoid, one can define an involutive algebraic
  structure on $C_c(G)$ by the formulae
\[
f\ast g(y)=\int f(x)g(x^{-1}y)d\lambda^{r(y)}
\]
and
\[
f^\ast(x)=\overline{f(x^{-1})}.
\]
The groupoid $C^\ast$-algebra $C^\ast(G)$ is the completion of
$C_c(G)$ under the universal norm (see \cite{Ren_LNM793} for details).

Assume that $T:X\to X$ is a local homeomorphism. The \emph{Renault-Deaconu}
groupoid associated to the pair $(X,T)$ (see \cite{Ren_CLA00}, \cite{Dea_TAMS95}) is defined via\[
G:=\{(x,m-n,y)\,:\, T^{m}(x)=T^{n}(y)\}\subset X\times\mathbb{Z}\times X.\]
Two triples $(x_{1},n_{1},y_{1})$ and $(x_{2},n_{2},y_{2})$ are
composable if and only if $y_{1}=x_{2}$, and, in this case, 
$(x_{1},n_{1},y_{1})(y_{1},n_{2},y_{2})=(x_{1},n_{1}+n_{2},y_{2})$.
The inverse of $(x,n,y)$ is $(y,-n,x)$. A basis of topology for
$G$ is given by the sets\[
Z(U,V,k,l)=\{(x,k-l,y)\in G\,:\, x\in U,y\in V\},\]
where $U$ and $V$ are open subsets of $X$ such that $T^{k}|_{U}$
and $T^{l}|_{V}$ are homeomorphisms onto their images and $T^{k}(U)=T^{l}(V)$.
The groupoid $G$ is \'etale so it admits a Haar system consisting
of counting measures.

Let $A$ be a $C^{*}$-algebra, let $\alpha_{t}:\mathbb{R}\to\Aut(A)$
be a strongly continuous action, and let $\beta\in\mathbb{R}$. Recall
(\cite[Section 5.3]{Br_Ro_OAQS2}) that a state $\varphi$ on $A$
is said to be an $(\alpha,\beta)$-KMS state if\[
\varphi(b\alpha_{i\beta}(a))=\varphi(ab)\]
for all $a,b$ entire for $\alpha$. If $\beta=0$, then $\varphi$
is an $\alpha$-invariant tracial state. The parameter $\beta$ is
called the \emph{inverse temperature}. 

Let $G$ be a locally compact \'etale groupoid and let $c$ be a
real-valued continuous cocycle. Then $c$ defines a one-parameter
automorphism group $\alpha^{c}$ of $C^{*}(G)$ via\[
\alpha_{t}^{c}(f)(\gamma)=e^{itc(\gamma)}f(\gamma),\]
for all $t\in\mathbb{R}$, $\gamma\in G$, and $f\in C_{c}(G)$ (\cite[Section II.5]{Ren_LNM793}).
Each probability measure $\mu$ on $G^{(0)}$ defines a state $\omega_{\mu}$
on $C^{*}(G)$ via\begin{equation}
\omega_{\mu}(f)=\int_{G^{(0)}}E(f)d\mu,\label{eq:KMSstate}\end{equation}
for all $f\in C^{*}(G)$, where $E$ is the canonical expectation
onto $C_{0}(G^{(0)})$. A probability measure $\mu$ on $G^{(0)}$
is quasi-invariant under $G$ with Radon-Nikod\'ym derivative $dr^{*}\mu/ds^{*}s=e^{-\beta c}$
if and only if the state $\omega_{\mu}$ of $C^{*}(G)$ is an $(\alpha^{c},\beta)$-KMS
state (\cite[Proposition II.5.4]{Ren_LNM793}). Moreover, if $c^{-1}(0)$
is principal then every $(\alpha^{c},\beta)$-KMS state of $C^{*}(G)$
is of the form $\omega_{\mu}$ for some quasi-invariant probability
measure $\mu$ on $G^{(0)}$ with Radon-Nikod\'ym derivative $dr^{*}\mu/ds^{*}\mu=e^{-\beta c}$
(\cite[Proposition 3.2]{Ku_Re_PAMS06}).

If $G$ is the Renault-Deaconu groupoid associated with a local homeomorphism
$T$ on a compact metric space $(X,\rho)$, then every real valued
continuous function $\varphi$ on $X$ defines a continuous one-cocycle
on $G$ via the formula\[
c_{\varphi}(x,m-n,y)=\sum_{i=0}^{m-1}\varphi(T^{i}x)-\sum_{i=0}^{n-1}\varphi(T^{i}y).\]
Moreover, every continuous one-cocycle on $G$ is of this form (see
\cite[Lemma 2.1]{De_Ku_Mu_JOT01}). In this case, the condition that
$c_{\varphi}^{-1}(0)$ is principal means that (\cite[page 2073]{Ku_Re_PAMS06})\[
T^{n}(x)=x\;\text{and}\: n\ge1\;\implies\;\varphi(x)+\varphi(Tx)+\dots+\varphi(T^{n-1}x)\ne0.\]
This condition is trivially satisfied when $\varphi$ is strictly
positive or strictly negative.

\section{The Main Result}
In this section we show that if $(X,\rho)$ is a compact metric space
and $T:X\to X$ is a local homeomorphism which satisfies additional
conditions as described below, then the Hausdorff measure gives rise
to a KMS-state with inverse temperature the Hausdorff dimension. We
will also show that the KMS state is unique under some mild hypotheses.
\begin{defn}\label{def:loc-scaling}
Let $(X,\rho)$ be a compact metric space and let $T:X\to X$ be a
local homeomorphism. We say that $T$ satisfies the \emph{local scaling
condition} if 
\[
(x, y) \mapsto \frac{\rho(Tx,Ty)}{\rho(x,y)}
\] 
extends to a continuous function $f$ on $X \times X$ that is strictly
positive on the diagonal $\Delta_X = \{(x,x)\;|\; x\in X\}$. 
\end{defn}
The next proposition provides an equivalent formulation of the local
scaling property. 
\begin{prop}\label{rem:equivdef}
Let $(X,\rho)$ be a compact metric space and let $T:X\to X$ be a
local homeomorphism. 
Then $T$ satisfies the local scaling condition if and only if 
there is a continuous function $\varphi\in C(X,\mathbb{R})$ 
such that:
\begin{itemize}
\item[i.]
for all $x\in X$,
\[
e^{\varphi(x)} = \lim_{y\to x}\frac{\rho(Tx,Ty)}{\rho(x,y)}; 
\]
\item[ii.]
for any $\varepsilon>0$ there is $\delta>0$
such that if $0<\rho(x,y)<\delta$, then
\begin{equation}
\left\vert \frac{\rho(x,y)}{\rho(Tx,Ty)}-e^{-\varphi(x)}\right\vert <\varepsilon.  \label{eq:0}
\end{equation}
\end{itemize}
\end{prop}
\begin{rem}
  Notice that since $T$ is assumed to be a local homeomorphism and $X$
  is compact, there is $\delta_1>0$ such that if
  $0<\rho(x,y)<\delta_1$ then $Tx\ne Ty$. In the following we will
 implicitly assume that $\delta$ above 
satisfies $\delta \le \delta_1$.  This will ensure
  that inequality (\ref{eq:0}) makes sense.
\end{rem}

\begin{proof}
 Suppose that $f$ satisfies the local scaling condition.  
 Then the function $\varphi: X \to  \mathbb{R}$,
 given by  $\varphi(x)=\log f(x,x)$ for $x\in X$,
is continuous on $X$; since $f$ is uniformly continuous on
$X\times X$, it follows easily that the conditions above are satisfied.

Conversely, suppose that $T$ satisfies the above conditions.  We then define
$f:X\times X\mapsto \mathbb{R}$ via 
\[
f(x,y)=\begin{cases}\frac{\rho(Tx,Ty)}{\rho(x,y)}& \text{ if
      }x\ne y \\
      e^{\varphi(x)} & \text{ if }x=y.
    \end{cases}
    \]
Then $f(x,x) > 0$ for all $x\in X$ and it suffices to
show that $f$ is continuous. Since $f$ is clearly continuous at $(x,y)$ if
$x\ne y$, we need only prove that $f$ is continuous on the diagonal.

We prove first that $f$ is bounded on $X\times X$. Assume, by
contradiction, that for each $n\in\mathbb{N}$ there are $x_n$ and
$y_n$ in $X$
such that $\vert f(x_n,y_n)\vert >n$.  Then there is a subsequence
$(x_{n_k},y_{n_k})$ that converges to $(x,y)\in X\times X$. If $x\ne y$
then it follows that $x_{n_k}\ne y_{n_k}$ eventually, and that $\lim_{k\to
  \infty}f(x_{n_k},y_{n_k})=f(x,y)$. 
But this contradicts the fact that the sequence $|f(x_{n_k},y_{n_k})|$
is unbounded.
Assume now that
$x=y$. Since $\varphi$ is continuous on $X$ and thus bounded, 
we may assume (by passing to a subsequence if necessary) that $x_{n_k}\ne y_{n_k}$ for all $k$.
Given $\varepsilon>0$, choose $\delta>0$ such that (\ref{eq:0})
holds. 
Then there is an integer $N \ge 1$ such that   for all $k \ge N$
\begin{alignat*}{3}
\rho(x,x_{n_k})&<\delta/2,\  & \rho(x,y_{n_k})&<\delta/2 &\qquad\text{and}\\
&  \vert 1/f(x, x)&-1/f(x_{n_k},x_{n_k})\vert &< \varepsilon
\end{alignat*}
(by the continuity of $e^{-\varphi}$). 
It follows that for $k\ge N$ 
$\rho(x_{n_k},y_{n_k})<\delta$ and, since
  $e^{-\varphi(x)}=1/f(x,x)$, we also have that 
\[
\left\vert \frac{1}{f(x_{n_k},y_{n_k})}-\frac{1}{f(x_{n_k},x_{n_k})}
\right\vert<\varepsilon .
\]
Therefore
\begin{align*}
\left\vert \frac{1}{f(x_{n_k},y_{n_k})}-\frac{1}{f(x,x)}  \right\vert &\le
\left\vert \frac{1}{f(x_{n_k},y_{n_k})}-\frac{1}{f(x_{n_k},x_{n_k})}  \right\vert\ +
\left\vert \frac{1}{f(x_{n_k},x_{n_k})}-\frac{1}{f(x,x)}  \right\vert\
\\
 &< 2\varepsilon.
\end{align*}
Since $\vert 1/f(x_n,y_n)\vert <1/n$ it follows that $1/f(x,x)=0$,
which is a contradiction. 
Thus $f$ is bounded on $X\times X$; let $M>0$ be a bound
of the function.  

We return now to the continuity of $f$ for $x=y$. Let $x_n\to x$ and
$y_n\to x$. If $x_n=y_n$ for all $n$ then
$\lim_{n\to \infty}f(x_n,x_n)=f(x,x)$ because $\varphi$ is assumed to be continuous. So,
by passing  to a subsequence, if necessary,
we may assume that $x_n\ne y_n$ for all $n$. Let $\varepsilon>0$ be given;
choose $\delta>0$ such that (\ref{eq:0}) holds. 
There is an integer $N$ such that for all $n\ge N$ 
\begin{alignat*}{3}
\rho(x,x_n) &<\delta/2, & \rho(x,y_n)&<\delta/2  &\qquad\text{and}\\
&\vert 1/f(x,x)&-1/f(x_{n},x_{n})\vert&< \varepsilon.
\end{alignat*}
Then, for such $n$,  
\begin{align*}
  \vert f(x_n,y_n)-f(x,x)\vert &= \vert f(x_n,y_n)f(x,x)\vert \left\vert \frac{1}{f(x,x)} -
  \frac{1}{f(x_n,y_n)}\right\vert \\
 &\le M^2\left(\left\vert \frac{1}{f(x,x)}-\frac{1}{f(x_n,x_n)} \right\vert
 + \left\vert \frac{1}{f(x_n,x_n)} -\frac{1}{f(x_n,y_n)}\right\vert\right)\\
&< 2M^2\varepsilon.
\end{align*}
Therefore, $f$ is continuous on $X\times X$ and $T$ satisfies the local scaling condition. 
\end{proof}

\begin{thm}
\label{thm:Main}Let $(X,\rho)$ be a compact metric space and let
$T:X\to X$ be a local homeomorphism that satisfies the local scaling
condition. Let $\varphi\in C(X,\mathbb{R})$ be such that\[
e^{\varphi(x)}=\lim_{y\to x}\frac{\rho(Tx,Ty)}{\rho(x,y)},\]
and let $c_{\varphi}$ be the associated one-cocycle on the
  Renault-Deaconu groupoid $G$ associated with $(X,T)$. Then
the state $\omega_{\mu}$ given by the equation \eqref{eq:KMSstate}
is an $(\alpha,\beta)$-KMS state where $\alpha=\alpha^{c_{\varphi}}$,
$\beta=\dim X$, and $\mu=\mu^{\beta}/\mu^{\beta}(X)$.\end{thm}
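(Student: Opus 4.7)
My plan is to invoke Renault's characterization (cited as \cite[Proposition II.5.4]{Ren_LNM793} in Section~\ref{sec:prelim}): $\omega_\mu$ is an $(\alpha^{c_\varphi},\beta)$-KMS state exactly when $\mu$ is quasi-invariant under the Deaconu-Renault groupoid $G$ with Radon-Nikod\'ym derivative $e^{-\beta c_\varphi}$. Since $G$ is \'etale and the bisections of the form $Z(U, T(U), 1, 0) = \{(x, 1, Tx) : x \in U\}$, with $U\subset X$ open and $T|_U$ a homeomorphism onto its image, together with the unit space topologically generate $G$, it suffices to check the Radon-Nikod\'ym identity on them; by iteration, the identity on a general basic bisection $Z(U,V,k,l)$ will follow. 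Unwinding the definitions of $r^*\mu$ and $s^*\mu$ on such a one-step bisection via the change of variable $y=Tx$ reduces the problem to verifying the single transfer identity
\[
\mu^\beta(T(B)) \;=\; \int_B e^{\beta\varphi(x)}\, d\mu^\beta(x)
\]
for every Borel $B \subset U$.

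The heart of the proof is to establish this transfer identity directly from the local scaling property. By Proposition~\ref{rem:equivdef} the ratio $(x,y) \mapsto \rho(Tx,Ty)/\rho(x,y)$ extends continuously to $X\times X$ with value $e^{\varphi(x)}$ on the diagonal. Combining this with compactness of $X$ and uniform continuity of $\varphi$, one obtains that for any $\eta > 0$ there exists $\delta > 0$ such that whenever $y, z$ lie in a common set of diameter less than $\delta$ and $x_0$ is any point of that set, then $\rho(Ty,Tz)/\rho(y,z) \in (e^{\varphi(x_0)-\eta}, e^{\varphi(x_0)+\eta})$. Thus $T$ restricted to any such small-diameter set is bi-Lipschitz with constants within these bounds, and the standard Lipschitz estimate for Hausdorff measure yields
\[
e^{\beta(\varphi(x_0)-\eta)}\mu^\beta(A) \;\le\; \mu^\beta(T(A)) \;\le\; e^{\beta(\varphi(x_0)+\eta)}\mu^\beta(A)
\]
for every Borel $A \subset U$ of diameter less than $\delta$ containing $x_0$.

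To finish, I would partition a general Borel $B \subset U$ into finitely many Borel pieces $B_1, \dots, B_N$, each of diameter less than $\delta$, choose base points $x_i \in B_i$, and sum the preceding inequalities. The same partition gives a Riemann-type two-sided estimate for $\int_B e^{\beta\varphi}\, d\mu^\beta$ with error factor $e^{\pm\beta\eta}$, and so $\mu^\beta(T(B))$ and $\int_B e^{\beta\varphi}\, d\mu^\beta$ differ by a multiplicative factor controlled by $e^{c\beta\eta}$ for some absolute constant $c$. Sending $\eta\to 0$ delivers the transfer identity.

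The main obstacle is the Hausdorff-measure step: one needs the scaling approximation to be \emph{uniform} on honest small-diameter sets, rather than a merely pointwise limit, so that one can genuinely apply the bi-Lipschitz comparison for $\mu^\beta$. This is precisely what condition (ii) of Definition~\ref{def:loc-scaling} is designed to provide, and which Proposition~\ref{rem:equivdef} makes transparent as continuity of the extended ratio on $X\times X$. The remaining ingredients---compactness of $X$, uniform continuity of $\varphi$, the existence of Borel partitions of arbitrarily small diameter, and iteration to general basic bisections---are routine.
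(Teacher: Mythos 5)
Your proposal is correct and follows essentially the same route as the paper: both reduce via Renault's \cite[Proposition II.5.4]{Ren_LNM793} to verifying the quasi-invariance identity on sections of $T$, both use the local scaling condition (together with uniform continuity of $\varphi$ and compactness) to obtain a two-sided comparison of $\mu^{\beta}(A)$ with $\mu^{\beta}(TA)$ on sets of small diameter, and both conclude by partitioning a general section into finitely many small pieces and letting the error parameter tend to zero. The only cosmetic differences are that you state the identity as a pushforward formula $\mu^{\beta}(TB)=\int_{B}e^{\beta\varphi}\,d\mu^{\beta}$ rather than the paper's change-of-variables form, and you invoke the standard bi-Lipschitz scaling of Hausdorff measure where the paper re-derives it directly by comparing $\varepsilon_{1}$-covers of $U$ and $TU$.
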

\begin{proof}
As
noted above in Section \ref{sec:prelim},  by 
\cite[Proposition II.5.4]{Ren_LNM793}, it suffices to show that $\mu$ 
is a quasi-invariant measure on $G^{(0)}$ and
\begin{equation}
\frac{dr^{*}\mu}{ds^{*}\mu}=e^{-\beta c_{\varphi}}.  \label{eq:RN}
\end{equation}
This is equivalent with\begin{equation}
\mu(U)=\int_{TU}e^{-\beta\varphi\bigl((T|_{U})^{-1}x\bigr)}d\mu(x)\label{eq:qinv}\end{equation}
for all open sets $U$ such that $T|_{U}$ is a homeomorphism onto
$TU$ (\cite[Page 6]{Ren_ETDN_05}; see also \cite{Ren_OAMP_03}).
We will call such sets \emph{sections} of $T$.

By the local scaling property (see Definition \ref{def:loc-scaling}) 
$(x, y) \mapsto \frac{\rho(Tx,Ty)}{\rho(x,y)}$ extends to a continuous 
function $f$ on $X \times X$ that is strictly positive on the
  diagonal. We know that $\varphi(x)=\log f(x,x)$ is uniformly
  continuous, since $f$ is continuous and $X$ is compact. This fact
  together with \eqref{eq:0} implies that for any $\varepsilon>0$ there is $\delta>0$
such that if $0<\rho(x,y)<\delta$ then\begin{equation}
\left\vert \frac{\rho(x,y)}{\rho(Tx,Ty)}-e^{-\varphi(x)}\right\vert <\varepsilon\label{eq:1}\end{equation}
and\begin{equation}
\vert
e^{-\varphi(x)}-e^{-\varphi(y)}\vert<\varepsilon.\label{eq:2}\end{equation}
Fix $\varepsilon>0$ such that $2\varepsilon<\min_{x\in X}e^{-\varphi(x)}$.
Suppose, first, that $U$ is a section of $T$ such that $\diam U<\delta$,
where $\delta$ is chosen such that \eqref{eq:1} and \eqref{eq:2}
hold. Let $\underline{x},\overline{x}\in\overline{U}$ such that $e^{-\varphi(\underline{x})}\le e^{-\varphi(x)}\le e^{-\varphi(\overline{x})}$
for all $x\in U$, where $\overline{U}$ is the closure of the set
  $U$. Let $\varepsilon_{1}>0$ and suppose that $\mathcal{A}$ 
is an $\varepsilon_{1}$-cover of $U$ consisting of subsets of $U$
(see \cite[page 167]{Edg_MTFG2}). Then, for any $A\in\mathcal{A}$
we have that\[
\diam A=\sup_{x,y\in A,x\ne y}\rho(x,y)=\sup_{x,y\in A,x\ne y}\rho(Tx,Ty)\frac{\rho(x,y)}{\rho(Tx,Ty)}.\]
Now\[
\left\vert \frac{\rho(x,y)}{\rho(Tx,Ty)}-e^{-\varphi(\underline{x})}\right\vert <2\varepsilon\]
and\[
\left\vert \frac{\rho(x,y)}{\rho(Tx,Ty)}-e^{-\varphi(\overline{x})}\right\vert <2\varepsilon\]
for all $x,y\in A$ with $x\ne y$. Therefore\[
\bigl(e^{-\varphi(\overline{x})}-2\varepsilon\bigr)\diam TA\le\diam A\le\bigl(e^{-\varphi(\underline{x})}+2\varepsilon)\diam TA.\]
Hence \[
\bigl(e^{-\varphi(\overline{x})}-2\varepsilon\bigr)^{\beta}\overline{\mu}_{\varepsilon_{1}}^{\beta}(TU)\le\overline{\mu}_{\varepsilon_{1}}^{\beta}(U)\le\bigl(e^{-\varphi(\underline{x})}+2\varepsilon\bigr)^{\beta}\overline{\mu}_{\varepsilon_{1}}^{\beta}(TU)\]
and, by taking the limit when $\varepsilon_{1}$ goes to $0$,\begin{equation}
\bigl(e^{-\varphi(\overline{x})}-2\varepsilon\bigr)^{\beta}\mu(TU)\le\mu(U)\le\bigl(e^{-\varphi(\underline{x})}+2\varepsilon\bigr)^{\beta}\mu(TU).\label{eq:3}\end{equation}
Let now $U$ be an arbitrary section of $T$. Then there is a finite disjoint
family $\{U_{1}, \dots ,  U_{N}\}$ of sections of $T$ such that $\overline{U}=\bigcup_{n=1}^N\overline{U}_{n}$
and $\diam U_{n}<\delta$ for $n = 1, \dots, N$.   Indeed, by the compactness of $\overline{U}$, there are finitely
many open sets $\{V_{1}, \dots ,  V_{N}\}$ such that $\overline{U} \subset \bigcup_{n=1}^N V_{n}$ and
 $\diam V_{n}<\delta$ for $n = 1, \dots, N$.   Now set $U_1 =: U \cap V_1$ and 
 \[
 U_{n+1} =: U \cap V_{n+1} \setminus (\overline{V}_{1} \cup \cdots \cup \overline{V}_{n}),
 \]
 for $n\ge 1$.
It is routine to check that the $U_n$ satisfy the requisite conditions.
Let $\underline{x}_{n},\overline{x}_{n}\in\overline{U}_{n}$
such that $e^{-\varphi(\underline{x}_{n})}\le e^{-\varphi(x)}\le e^{-\varphi(\overline{x}_{n})}$
for all $x\in U_{n}$. Then, by inequality \eqref{eq:3},\[
\mu(U)=\sum_{n}\mu(U_{n})\le\sum_{n}\int_{TU_{n}}\bigl(e^{-\varphi(\underline{x}_{n})}+2\varepsilon)^{\beta}d\mu(x).\]
For all $x\in TU_{n}$ we have that $e^{-\varphi(\underline{x}_{n})}\le e^{-\varphi\bigl((T|_{U})^{-1}(x)\bigr)}$.
Thus\[
\mu(U)\le\int_{TU}\left(e^{-\varphi\bigl((T|_{U})^{-1}(x)\bigr)}+2\varepsilon\right)^{\beta}d\mu(x).\]
Similarly, since $e^{-\varphi\bigl((T|_{U})^{-1}(x)\bigr)}\le e^{-\varphi(\overline{x}_{n})}$
for all $x\in TU_{n}$, we have that\[
\int_{TU}\left(e^{-\varphi\bigl(T|_{U})^{-1}(x)\bigr)}-2\varepsilon\right)^{\beta}d\mu(x)\le\mu(U).\]
Since $\varepsilon$ was arbitrarily chosen it follows that \eqref{eq:qinv}
holds and the conclusion follows.
\end{proof}
To ensure the uniqueness of the $(\alpha,\beta)$-KMS state we need
to impose some conditions on the local homeomorphism $T$ and the
map $\varphi$. We say that $T$ is \emph{positively expansive} if
there is an $\varepsilon>0$ such that for all $x\ne y$ there is
an $n\in\mathbb{N}$ with $\rho(T^{n}x,T^{n}y)\ge\varepsilon$. We
say that $T$ is \emph{exact} if for every non-empty open set $U\subset X$
there is an $n>0$ such that $T^{n}(U)=X$.
We say that a real-valued continuous function $\varphi$ on $X$ satisfies the 
\emph{Bowen condition} with respect to $T$ 
(see \cite[Definition 2.7]{Ku_Re_PAMS06})
if there are $\delta, C>0$ such that
\[
\sum_{i=0}^{n-1}\varphi(T^{i}x)-\varphi(T^{i}y)\le C,
\]
for all $x, y \in X$ and $n > 0$ such that $\rho(T^ix, T^iy) \le \delta$
for $0 \le i \le n-1$.
Note that if $T$ is positively expansive and 
$\varphi$ is H\"older, that is, $\vert\varphi(x)-\varphi(y)\vert\le k\rho(x,y)^{l}$
for some positive constants $k$ and $l$, then $\varphi$ satisfies
the Bowen condition 
(see the discussion following \cite[Definition 2.7]{Ku_Re_PAMS06}).
\begin{prop}
\label{pro:uniqueness}Assume that the local homeomorphism $T:X\to X$
is positively expansive and exact, and assume that $\varphi$ satisfies
the Bowen condition. Let $\alpha$ be the action on $C^{*}(G)$ determined
by $c_{\varphi}$ and let $\beta$ be the Hausdorff dimension of $X$.
If $T$ satisfies the local scaling condition and $c_{\varphi}^{-1}(0)$
is principal then $\beta$ is the unique inverse temperature which
admits a KMS state for $\alpha$. Moreover, the $(\alpha,\beta)$-KMS
state $\omega_{\mu}$ is unique.\end{prop}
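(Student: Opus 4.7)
The plan is to translate the KMS problem into a question about measures on $X$ and then invoke the uniqueness results of \cite{Ku_Re_PAMS06}. Since $c_\varphi^{-1}(0)$ is principal, \cite[Proposition 3.2]{Ku_Re_PAMS06} identifies the $(\alpha,\beta')$-KMS states of $C^*(G)$ with probability measures $\nu$ on $X$ that are $G$-quasi-invariant with Radon-Nikodym derivative $e^{-\beta' c_\varphi}$. As in the proof of Theorem \ref{thm:Main}, this is equivalent to the conformal scaling identity
\[
\nu(U) = \int_{TU} e^{-\beta'\varphi((T|_U)^{-1}(x))}\, d\nu(x)
\]
holding for every section $U$ of $T$; equivalently, $\nu$ is a probability eigenmeasure with eigenvalue $1$ for the dual Ruelle transfer operator $L_{-\beta'\varphi}^{*}$.

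Next I would appeal to the transfer-operator machinery of \cite{Ku_Re_PAMS06}. Under positive expansivity, exactness, and the Bowen condition on the potential $-\beta'\varphi$ (inherited from that on $\varphi$), their analysis yields, for each real $\beta'$, a unique probability eigenmeasure $\nu_{\beta'}$ of $L_{-\beta'\varphi}^{*}$, and the corresponding eigenvalue is necessarily equal to $e^{P(-\beta'\varphi)}$, where $P$ denotes topological pressure. Consequently an $(\alpha,\beta')$-KMS state exists if and only if $P(-\beta'\varphi)=0$, and when it exists it is the unique state $\omega_{\nu_{\beta'}}$.

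It remains to identify the roots of the equation $P(-\beta'\varphi)=0$. Theorem \ref{thm:Main} already supplies the root $\beta'=\dim X$, so uniqueness of the inverse temperature reduces to strict monotonicity of the map $\beta' \mapsto P(-\beta'\varphi)$. The plan is to argue that positive expansivity combined with the local scaling condition forces $\varphi$ to be bounded below by a strictly positive constant (since otherwise the iterates of $T$ could not uniformly expand distances), and then the elementary pressure estimate $P(-\beta_1\varphi) - P(-\beta_2\varphi) \ge (\beta_2-\beta_1)\min\varphi$ for $\beta_1<\beta_2$ rules out further roots. Combining these steps with Theorem \ref{thm:Main} yields both uniqueness claims.

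The main obstacle will be the last step: rigorously extracting strict positivity of $\varphi$ from the interplay between positive expansivity and the local scaling condition (and doing so without changing the metric, since the local scaling function $\varphi$ is metric-dependent). Once this monotonicity is in hand, the transfer from the KMS picture to measures and the invocation of the Kumjian-Renault uniqueness theorem are largely formal.
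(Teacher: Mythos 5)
Your reduction runs parallel to the paper's argument up to the last step: using principality of $c_{\varphi}^{-1}(0)$ and \cite[Proposition 3.2]{Ku_Re_PAMS06} to identify $(\alpha,\beta')$-KMS states with probability measures fixed by $\mathcal{L}_{-\beta'\varphi}^{*}$, and then using the Ruelle--Perron--Frobenius machinery to conclude that such a measure exists if and only if $P(T,-\beta'\varphi)=0$, in which case it is unique. The paper does essentially this, citing Walters' version of RPF via \cite[Theorem 2.8]{Ku_Re_PAMS06}, then \cite[Theorem 3.5]{Ku_Re_PAMS06} for the pressure characterization and for uniqueness of the state at inverse temperature $\beta$, and \cite[Proposition 4.2]{Ren_OAMP_03} (see also \cite[Proposition 3.4.1]{Ren_2009}) for uniqueness of the inverse temperature itself.

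The gap is exactly where you flag it, and it is genuine. Your route to uniqueness of the inverse temperature rests on the claim that positive expansivity together with the local scaling condition forces $\min_{X}\varphi>0$, which would give strict monotonicity of $\beta'\mapsto P(T,-\beta'\varphi)$ via the estimate $P(-\beta_1\varphi)-P(-\beta_2\varphi)\ge(\beta_2-\beta_1)\min\varphi$. That implication does not follow from the hypotheses: a positively expansive map need only separate points after \emph{some} iterate and may contract distances locally at non-periodic points in the given metric (the classical results of Reddy and Coven--Reddy produce an \emph{expanding} metric only after replacing $\rho$, which, as you correctly note, is not permitted here because $\varphi$ is defined through $\rho$). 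What the hypotheses actually yield is weaker: positive expansivity forces the Birkhoff sums of $\varphi$ over periodic orbits to be nonnegative (a locally attracting periodic orbit would violate expansivity), and principality of $c_{\varphi}^{-1}(0)$ then makes them strictly positive; but this pointwise-on-periodic-orbits information does not give $\varphi\ge c>0$ on all of $X$, which is what your pressure estimate needs. To close the argument you should replace this step by an appeal to \cite[Proposition 4.2]{Ren_OAMP_03}, as the paper does: uniqueness of the inverse temperature is extracted there from the uniqueness of the RPF eigenvalue--eigenmeasure pair for $\mathcal{L}_{-\varphi}$, without any strict positivity assumption on $\varphi$.
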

\begin{proof}
Theorem \ref{thm:Main} implies that there is a $(\alpha,\beta)$-KMS
state, namely $\omega_{\mu}$. By Walters\textquoteright{} version
of the Ruelle-Perron-Frobenius Theorem (\cite[Theorem 2.8]{Ku_Re_PAMS06},
see also \cite[Theorem 6.1]{Ren_ETDN_05}, \cite[Theorem 8]{Wal_78},
\cite[Theorem 2.16]{Wal_01}) there are unique $\lambda>0$ and probability
measure $\nu$ such that\[
\mathcal{L}_{-\varphi}^{*}(\nu)=\lambda\nu,\]
where $\mathcal{L}_{-\varphi}(f)(x)=\sum_{Ty=x}e^{-\varphi(y)}f(y)$
is the transfer operator associated with $\varphi$. Proposition 4.2
of \cite{Ren_OAMP_03} (see also \cite[Proposition 3.4.1]{Ren_2009})
implies that $\beta$ is the unique inverse temperature which admits
a KMS state for $\alpha$ and $\nu=\mu$. 

Theorem \ref{thm:Main} and \cite[Theorem 3.5 i)]{Ku_Re_PAMS06} imply
that $P(T,-\beta\varphi)=0$, where $P(T,\cdot)$ is the topological
pressure(\cite[Section 9.1]{Wal_GTM82}). Since $c_{\varphi}^{-1}(0)$
is principal, \cite[Theorem 3.5 ii)]{Ku_Re_PAMS06} implies that the
$(\alpha,\beta)$-KMS state $\omega_{\mu}$ is unique.
\end{proof}
In the last result of this section we show how one can compute the
topological entropy of the local endomorphism $T$ under some suitable
hypothesis. Recall that the topological entropy of $T$, $h(T)$,
equals $P(T,0)$.
\begin{prop}
\label{pro:Entropy}Let $T:X\to X$ be a local homeomorphism. Suppose
that $T$ is positively expansive, exact, and satisfies the local
scaling condition. Assume that the map $\varphi$ is constant,
that is, there is a constant $\tau>1$ such that\[
\tau=\lim_{y\to x}\frac{\rho(Tx,Ty)}{\rho(x,y)},\]
for all $x\in X$. Then $h(T)=\beta\ln\tau$, where $\beta$ is the
Hausdorff dimension of $X$.\end{prop}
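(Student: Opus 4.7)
The plan is to reduce the claim to the identity $P(T,-\beta\varphi)=0$ already extracted in the proof of Proposition \ref{pro:uniqueness} and then exploit the elementary affine behavior of topological pressure under addition of a constant.

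First I would check that Proposition \ref{pro:uniqueness} applies in this setting. Since $\varphi$ is the constant function $\ln\tau$, the telescoping sum in the Bowen condition is identically zero, so the Bowen condition holds with $C=0$. Moreover $\tau>1$ gives $\varphi>0$, and as observed at the end of Section \ref{sec:prelim} strict positivity of $\varphi$ forces $c_\varphi^{-1}(0)$ to be principal. The remaining hypotheses (positive expansivity, exactness, local scaling) are assumed. Thus Proposition \ref{pro:uniqueness} applies and, by the argument given there (via \cite[Theorem 3.5 i)]{Ku_Re_PAMS06}), we obtain
\[
P(T,-\beta\varphi)=0,
\]
where $\beta=\dim X$.

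Next I would invoke the standard fact (see \cite[Theorem 9.7(ii)]{Wal_GTM82}) that for a continuous map $T:X\to X$ on a compact space, the topological pressure satisfies $P(T,f+k)=P(T,f)+k$ for every $f\in C(X,\mathbb{R})$ and every constant $k\in\mathbb{R}$. Applying this with $f=0$ and $k=-\beta\ln\tau$, and recalling that $h(T)=P(T,0)$, we get
\[
0=P(T,-\beta\ln\tau)=P(T,0)-\beta\ln\tau=h(T)-\beta\ln\tau,
\]
so $h(T)=\beta\ln\tau$, as required.

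There is essentially no substantive obstacle here: once one recognizes that a constant potential $\varphi$ trivializes both the Bowen condition and the principal condition on $c_\varphi^{-1}(0)$, the conclusion is immediate from Proposition \ref{pro:uniqueness} together with the translation property of the pressure. The only thing to be slightly careful about is to verify that each hypothesis of Proposition \ref{pro:uniqueness} is available in the statement of Proposition \ref{pro:Entropy}, which it is.
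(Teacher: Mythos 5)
Your proof is correct and follows essentially the same route as the paper: both deduce $P(T,-\beta\ln\tau)=0$ from Theorem \ref{thm:Main}, Proposition \ref{pro:uniqueness} and \cite[Theorem 3.5]{Ku_Re_PAMS06}, and then use the identity $P(T,-\beta\ln\tau)=h(T)-\beta\ln\tau$. Your explicit verification that a constant potential trivially satisfies the Bowen condition and makes $c_\varphi^{-1}(0)$ principal is a welcome detail that the paper leaves implicit.
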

\begin{proof}
By Theorem \ref{thm:Main} and Proposition \ref{pro:uniqueness} there
is a unique $(\alpha,\beta)$-KMS state $\omega_{\mu}$. Moreover,
Theorem 3.5 of \cite{Ku_Re_PAMS06} implies that $P(T,-\beta\ln\tau)=0$.
Since $P(T,-\beta\ln\tau)=h(T)-\beta\ln\tau$, it follows that $h(T)=\beta\ln\tau$.
\end{proof}

\section{Examples}

\subsection{\label{sub:Cuntz-algebras}Cuntz algebras}

Fix $n\in\mathbb{N}$ with $n>1$ and let $E=\{1,\dots,n\}$ be the
alphabet with $n$ letters. Let $(r_{1},r_{2},\dots,r_{n})$ be a
list of positive numbers such that $r_{i}<1$ for all $i\in\{1,\dots,n\}$.
We call such a list a \emph{contractive ratio list} (see \cite[Chapter 4]{Edg_MTFG2}).
We set $X$ to be the infinite path space over the alphabet $E$,
that is\[
X=E^{\infty}=\{(x_{k})_{k\in\mathbb{N}}\,:\, x_{k}\in E\}.\]
We write $E^{k}$ for the set of paths of length $k$ over $E$ and
we set $E^{*}=\bigcup_{k}E^{k}$. We define a metric on $X$ based
on the given ratio list. For $\sigma\in E^{k}$ the cylinder of $\sigma$
is\begin{equation}
Z(\sigma)=\{(x_{m})\in X\,:\, x_{0}=\sigma_{0},\dots,x_{k-1}=\sigma_{k-1}\}.\label{eq:cylinder}\end{equation}
We specify the metric on $X$ by requiring that the diameter of a
cylinder $Z(\sigma)$, with $\sigma\in E^{k}$ for some k, to equal
$r_{\sigma}:=r_{\sigma_{0}}r_{\sigma_{1}}\dots r_{\sigma_{k-1}}$ (see \cite[Section 4.2]{Edg_MTFG2}). Thus
for $x\ne y$\begin{eqnarray*}
\rho(x,y) & = & \begin{cases}
\inf\{r_{\sigma}\,:\, x,y\in Z(\sigma)\} & \text{if }x_{0}=y_{0}\\
1 & \text{if }x_{0}\ne y_{0}\end{cases}.\end{eqnarray*}
Then the left shift, $T:X\to X$, $T((x_{k}))=(x_{k+1})$, is a local
homeomorphism on $X$. If $G$ is the corresponding Renault-Deaconu
groupoid then $C^{*}(G)$ is isomorphic to the Cuntz algebra $\mathcal{O}_{n}$
(\cite[Section III.2]{Ren_LNM793}, \cite{Dea_TAMS95}). Recall that
  the Cuntz algebra
  is the universal $C^\ast$-algebra generated by $n$ isometries $\{S_i\}_{i=1}^n$ satisfying
  the following relations
\[
S_i^\ast S_j=\delta_{ij}I\, \text{and}\, \sum_{i=1}^n S_iS_i^\ast =I.
\]
Under our identification
the generating isometries of $\mathcal{O}_{n}$ are given by 
$S_{j}=1_{\Gamma_{j}}$, where $\Gamma_{j}=\{(jx,1,x)\,:\, x\in X\}$, 
for $j=1,\dots,n$.    A routine computation shows that $T$ satisfies the 
local scaling property and 
\[
\lim_{y\to x}\frac{\rho(Tx,Ty)}{\rho(x,y)}=\frac{1}{r_{x_{0}}},
\]
that is, $\varphi(x)=-\log r_{x_{0}}$. Thus the automorphism
$\alpha$ on $\mathcal{O}_{n}$ is determined by
\begin{equation}
\alpha_{t}(S_{j})=e^{-it\log r_{j}}S_{j}.\label{eq:gauge_action}
\end{equation}
The Hausdorff dimension $\beta$ of $X$ is the
unique number that satisfies the equation (\cite[Theorem 6.4.3]{Edg_MTFG2})
\begin{equation}
\sum_{i=1}^{n}r_{i}^{s}=1.\label{eq:Hausdorff_Cantor}\end{equation}
The Hausdorff measure on $X$ is the unique Borel measure that satisfies
$\mu^{\beta}(Z(\sigma))=r_{\sigma}^{\beta}$ for all $\sigma\in E^{*}$.
Theorem \ref{thm:Main} and Proposition \ref{pro:uniqueness} imply
that there exists a KMS-state for $\alpha$ at temperature $\beta$
if and only if $\beta$ satisfies \eqref{eq:Hausdorff_Cantor}. The
KMS state is unique in this case. We recover, thus, Theorem 2.2 of
\cite{Eva_80}. 

If $r_{1}=r_{2}=\dots=r_{n}=1/e$, then the inverse temperature is
$\log n$, which is the main result in \cite{Ole_Ped_MS78}. Moreover
the topological entropy of $T$ is $\log n$. More generally, let
$s>0$ be arbitrary and set $r_{1}=r_{2}=\dots=r_{n}=n^{-1/s}$. Then
$(r_{1},\dots,r_{n})$ is a contractive ratio list. The Hausdorff
dimension of $X$ is $\beta=s$ and $\varphi(x)=\frac{\log n}{s}$.
The topological entropy of $T$ is still $\log n$.

\subsection{\label{sub:Generalized-gauge-actions-Cuntz}Generalized gauge actions
on Cuntz algebras}

We still assume that $E=\{1,\dots,n\}$ is a finite alphabet and we
let $X=E^{\infty}$ endowed with the product topology. Then $X$ is
a compact topological space. Let $T$ be the left shift, as in the
previous example. Fix now a continuous function $f$ on $X$ such
that $f(x)>1$ for all $x\in X$. We  define next a metric $\rho_{f}$
on $X$ with the property
\begin{equation}
\lim_{y\to
  x}\frac{\rho_{f}(Tx,Ty)}{\rho_{f}(x,y)}=f(x).\label{eq:f_metric}
\end{equation}
Let $\sigma\in E^{*}$ and let $Z(\sigma)$ be the cylinder of $\sigma$
as defined in \eqref{eq:cylinder}. We will write $\vert\sigma\vert$
for the length of the finite word $\sigma$. Define
\begin{equation}\label{eq:w_sigma}
w_{\sigma}:=\max_{z,w\in
  Z(\sigma)}\left(\prod_{i=0}^{\vert\sigma\vert-1}f(T^{i}z)\cdot\prod_{i=0}^{\vert\sigma\vert-1}f(T^{i}w)\right)^{-1/2}.
\end{equation}
Let $\alpha,\beta\in E^{*}$ such that $\vert\alpha\vert<\vert\beta\vert$
and $\alpha_{i}=\beta_{i}$ for $i=0,\dots,\vert\alpha\vert-1$; in this case,
write  $\alpha<\beta$ (\cite{Edg_MTFG2}). Since $f(x)>1$
for all $x\in X$, it follows that if $\alpha<\beta$ then
$w_{\beta}>w_{\alpha}$ and $\lim_{n\to \infty}w_{\sigma|n}=0$ for
$\sigma\in E^\infty$.
Proposition 2.6.5 of \cite{Edg_MTFG2} implies that there is a metric
$\rho_{f}$ on $X$ such that the diameter of $Z(\alpha)=w_{\alpha}$
for all $\alpha\in E^{*}$ and, if $x,y\in X$ such that
$x_{i}=y_{i}$ for $i=0,\dots,k-1$ and $x_{k}\ne y_{k}$ then
$\rho_{f}(x,y)=w_\sigma$, where $\sigma\in E^k$ such that $\sigma_i=x_i$, 
$i=0,\dots, k-1$. Thus if $x,y\in X$ and the length of their common
longest prefix $\sigma$ is at least one, then
\begin{equation}
  \label{eq:4}
  \rho_f(x,y)=w_\sigma:=\max_{z,w\in
  Z(\sigma)}\left(\prod_{i=0}^{\vert\sigma\vert-1}f(T^{i}z)\cdot\prod_{i=0}^{\vert\sigma\vert-1}f(T^{i}w)\right)^{-1/2}.
\end{equation}
Note that the maximum in (\ref{eq:4}) is attained for some
$\overline{z}$ and $\overline{w}$ in $Z(\sigma)$ since the function
$f$ is continuous and the cylinder $Z(\sigma)$ is a closed subset
  of the compact space $X$. Also, if $m=\min_{x\in X}\vert f(x)\vert$ and
$M=\max_{x\in X} \vert f(x)\vert$, then
\[
\frac{1}{M^{\vert\sigma\vert}}\le \rho_f(x,y) \le \frac{1}{m^{\vert\sigma\vert}},
\]
where $\sigma$ is the longest common prefix of $x$ and $y$, for
  all $x$ and $y$ in $X$. Thus,
since $m>1$ by hypothesis, for $x$ and $y$ in $X$ we have 
that $\rho_f(x,y)<\varepsilon$ if and only if
there is  $N\ge 1$ and $\sigma\in E^N$ such that $x,y\in
Z(\sigma)$. Thus the metric $\rho_f$ generates the topology on $X$. 
If $(r_{1},\dots,r_{n})$ is a contractive ratio list and $f(x)=1/r_{x_{0}}$
we recover the metric from the previous example.

We prove next that $T$ satisfies the local scaling condition 
(see Definition \ref{def:loc-scaling}) with respect to
the metric $\rho_{f}$ with $\varphi = \log f$.
First we need a lemma.

\begin{lem}\label{lem:generalized_metric}
  Let $x,y\in X$ such that their longest common prefix $\sigma$ has
  length at least $2$. Then there are points $z_i,w_i\in Z(\sigma)$,
  $i=1,2$ such that
  \begin{equation}\label{eq:ineq_distance}
  \bigl(f(z_1)f(w_1)\bigr)^\frac{1}{2} \le
  \frac{\rho_f(Tx,Ty)}{\rho_f(x,y)}\le \bigl(f(z_2)f(w_2)\bigr)^{\frac{1}{2}}.
\end{equation}
\end{lem}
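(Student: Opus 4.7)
The plan is to unfold both distances via \eqref{eq:4}. With $k := |\sigma| \geq 2$, the longest common prefix of $Tx$ and $Ty$ is $\sigma' := \sigma_1 \cdots \sigma_{k-1}$ of length $k-1 \geq 1$, so $\rho_f(x,y) = w_\sigma$ and $\rho_f(Tx,Ty) = w_{\sigma'}$. The shift restricts to a homeomorphism $T\colon Z(\sigma) \to Z(\sigma')$ (it simply drops the fixed first letter $\sigma_0$), so I can reparametrize the maximum in the definition of $w_{\sigma'}$ by writing $z' = Tz$, $w' = Tw$ for $(z,w) \in Z(\sigma)^2$ and using $T^i z' = T^{i+1} z$.

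Next, since $z$ and $w$ appear independently in \eqref{eq:w_sigma}, maximizing $(AB)^{-1/2}$ amounts to minimizing the positive product $AB$. Setting $P(z,w) := \prod_{i=0}^{k-1} f(T^i z) \prod_{i=0}^{k-1} f(T^i w)$ and $Q(z,w) := \prod_{i=1}^{k-1} f(T^i z) \prod_{i=1}^{k-1} f(T^i w)$, I obtain $w_\sigma = \bigl(\min_{(z,w) \in Z(\sigma)^2} P(z,w)\bigr)^{-1/2}$ and, after the reparametrization, $w_{\sigma'} = \bigl(\min_{(z,w) \in Z(\sigma)^2} Q(z,w)\bigr)^{-1/2}$. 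By compactness of $Z(\sigma)$ and continuity of $f$, both minima are attained, and the key identity is $P(z,w) = f(z) f(w) Q(z,w)$.

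Now let $(z_1, w_1) \in Z(\sigma)^2$ attain $\min P$ and $(z_2, w_2) \in Z(\sigma)^2$ attain $\min Q$. Then $\min Q \leq Q(z_1, w_1) = \min P / \bigl(f(z_1) f(w_1)\bigr)$, which rearranges to $\min P / \min Q \geq f(z_1) f(w_1)$; dually, $\min P \leq P(z_2, w_2) = f(z_2) f(w_2) \min Q$ gives $\min P / \min Q \leq f(z_2) f(w_2)$. Since $\rho_f(Tx,Ty)/\rho_f(x,y) = w_{\sigma'}/w_\sigma = \sqrt{\min P / \min Q}$, taking square roots yields \eqref{eq:ineq_distance}.

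The argument is essentially bookkeeping rather than analysis, so there is no real obstacle. The only conceptual point worth flagging is that the two-variable maximum in \eqref{eq:w_sigma} separates into (the reciprocal square root of) a single joint minimum of the underlying product, which is precisely what makes $P$ and $Q$ differ by the clean factor $f(z) f(w)$ and produces the symmetric geometric-mean bounds.
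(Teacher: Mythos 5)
Your proposal is correct and follows essentially the same route as the paper: both identify the ratio as $w_{\sigma'}/w_\sigma$, use the bijection between $Z(\sigma)$ and $Z(\sigma')$ given by the shift, and bound the ratio from each side by comparing the extremizer for one cylinder against the (sub)optimal value it yields for the other. Your reformulation via the identity $P(z,w)=f(z)f(w)Q(z,w)$ is just a cleaner bookkeeping of the paper's two displayed inequalities.
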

\begin{proof}
  Observe that $w_\sigma=\rho_f(x,y)$ and $w_{\sigma^\prime
  }=\rho_f(Tx,Ty)$ (Equation~(\ref{eq:4})), where $\sigma^\prime
  :=\sigma_1\dots\sigma_{\vert\sigma\vert-1}$. Then
  \[
  \frac{\rho_f(Tx,Ty)}{\rho_f(x,y)}=\frac{w_{\sigma^\prime}}{w_\sigma}. 
  \]
  Note that
  if $z\in Z(\sigma)$ then $Tz\in Z(\sigma^\prime)$. Conversely, if
  $z^\prime \in Z(\sigma^\prime)$ then $\sigma_0z^\prime \in
  Z(\sigma)$.  Let $z_1,w_1\in Z(\sigma)$ the points for which the maximum in
  (\ref{eq:4}) is attained. That is,
  \[
  w_\sigma=\left(\prod_{i=0}^{\vert\sigma\vert-1}f(T^{i}z_1)\cdot\prod_{i=0}^{\vert\sigma\vert-1}f(T^{i}w_1)\right)^{-1/2}.
  \] Then $Tz_1,Tw_1\in Z(\sigma^\prime)$ and
  \begin{eqnarray*}
  w_\sigma&=&\bigl(f(z_1)f(w_1)\bigr)^{-\frac{1}{2}}\cdot
  \left(\prod_{i=1}^{\vert\sigma\vert-1}f(T^{i}z_1)\cdot\prod_{i=1}^{\vert\sigma\vert-1}f(T^{i}w_1)\right)^{-1/2}\\
  &\le&\bigl(f(z_1)f(w_1)\bigr)^{-\frac{1}{2}}\cdot w_{\sigma^\prime}.
\end{eqnarray*}
Thus the first part of (\ref{eq:ineq_distance}) holds. Let now
$\overline{z}_2,\overline{w}_2\in Z(\sigma^\prime)$ such that
\[
w_{\sigma^\prime}=\left(\prod_{i=0}^{\vert\sigma\vert-2}f(T^{i}\overline{z}_2)\cdot\prod_{i=0}^{\vert\sigma\vert-2}f(T^{i}\overline{w}_2)\right)^{-1/2}.
\] Then $z_2=\sigma_0\overline{z}_2$ and $w_2=\sigma_0\overline{w}_2$
belong to $Z(\sigma)$ and
\[
\left(\prod_{i=0}^{\vert\sigma\vert-1}f(T^{i}z_2)\cdot\prod_{i=0}^{\vert\sigma\vert-1}f(T^{i}w_2)\right)^{-1/2} 
\le w_\sigma.
\]
Thus
\[
\bigl(f(z_2)f(w_2)\bigr)^{-\frac{1}{2}} w_{\sigma^\prime}\le w_\sigma,
\] which is the second part of (\ref{eq:ineq_distance}).
\end{proof}

Let $\varepsilon >0$ be given such that $\varepsilon< \min f(x)$. Since $f$ is uniformly continuous on $X$
there 
is a $\delta >0$ such that
if $\sigma\in E^*$ is chosen so that
$1/m^{\vert\sigma\vert}<\delta$ then $\vert 
f(z)-f(w)\vert<\varepsilon$ for all $z,w\in Z(\sigma)$. Let
  $x,y\in X$ be such that the length of their common prefix $\sigma$
  satisfies $1/m^{\vert\sigma\vert}<\delta$. Suppose that $z_1,w_1$ and
  $z_2,w_2$ are the points in $Z(\sigma)$ for which the inequality
\eqref{eq:ineq_distance} holds. Since $\vert f(z_2)-f(x)\vert
  <\varepsilon$ it follows that $f(z_2)<f(x)+\varepsilon$. Similarly,
  $f(w_2)<f(x)+\varepsilon$, $f(x)-\varepsilon<f(w_1)$ and
  $f(x)-\varepsilon<f(z_1)$. Since all the above quantities are
  positive, inequality \eqref{eq:ineq_distance} implies that 
\[
f(x)-\varepsilon \le
\frac{\rho_f(Tx,Ty)}{\rho_f(x,y)}\le f(x)+\varepsilon
\]
Therefore~(\ref{eq:f_metric}) holds and (\ref{eq:0}) also holds with $\varphi = \log f$; 
hence, $T$ satisfies the local scaling condition  (by Proposition
\ref{rem:equivdef}). Since the topology on the Renault-Deaconu
groupoid $G$ associated to $(X,T)$ depends only on the topology on $X$
and not the underlying metric, if follows that it is still the case
that $C^*(G)\simeq\mathcal{O}_{n}$.
The associated automorphism $\alpha_{t}$ of $C^{*}(G)$
is given via\[
\alpha_{t}(a)(x,m-k,y)=e^{itc_{f}(x,m-k,y)}a(x,m-k,y),\]
for all $(x,m-k,y)\in G$ and $a\in C_{c}(G)$, where the cocycle
$c_{f}$ is given by\begin{equation}
  c_{f}(x,m-k,y)=\log\left(\prod_{i=0}^{m-1}f(T^{i}x)/\prod_{i=0}^{k-1}f(T^{i}y)\right).\label{eq:cocycle_gen}
\end{equation} 
These actions generalize the gauge actions on $\mathcal{O}_{n}$ described
in the previous example. To see this, recall from the previous
  example that, under our identification, $S_{j}=1_{\Gamma_{j}}$,
where $\Gamma_{j}=\{(jx,1,x)\,:\, x\in X\}$, for $j=1,\dots,n$.
If $f(x)=1/r_{x_{0}}$ an easy computation shows that \[
e^{itc_{f}(x,m-k,y)}S_{j}(x,m-k,y)=e^{-it\log r_{j}}S_{j}(x,m-k,y),\]
and we recover \eqref{eq:gauge_action}. Assuming that the Hausdorff
dimension $\beta$ of $X$ is strictly positive and finite, Theorem
\ref{thm:Main} implies that there exists a KMS-state $\omega_{\mu}$
on $\mathcal{O}_{n}$ for $\alpha$ at inverse temperature $\beta$. 

If $\log f(x)$ satisfies the Bowen condition, then $\beta$ and $\mu$
are the unique solutions of the equation $\mathcal{L}_{f,\beta}^{*}(\mu)=\mu$,
where\[
\mathcal{L}_{f,\beta}(a)(x)=\sum_{j=1}^{n}f(jx)^{-\beta}a(jx),\]
for all $x\in X$ and $a\in C(X)$ (\cite[Proposition 7.1]{Ren_ETDN_05},
\cite[Theorem 2.8 and proof of Proposition 3.5]{Ku_Re_PAMS06}). In this case, $\omega_{\mu}$
is the unique KMS state for $\alpha$ on $\mathcal{O}_{n}$ by Proposition
\ref{pro:uniqueness}.

\subsection{Graph $C^{*}$-algebras}

Suppose that $E=(E^{0},E^{1},r,s)$ is a finite directed graph, where
$r:E^1\to E^0$ and $s:E^1\to E^0$ are the range and source maps,
respectively. Suppose that $\{r_{e}\}_{e\in E^{1}}$
is a list of positive numbers\footnote{the ratio $r_e$ should not be confused with $r(e)$, the range of the edge $e$.}
 such that $r_{e}<1$ for all $e\in E^{1}$.
We say that $\{r_{e}\}_{e\in E^{1}}$ is a \emph{contractive ratio
list }for the graph $E$ (see \cite[Section 4.3]{Edg_MTFG2}). A path of length $n$ in the graph $E$
is a finite sequence $\sigma=\sigma_{0}\sigma_{1}\dots\sigma_{n-1}$
such that $\sigma_{i}\in E^{1}$ for all $i\in\{0,\dots,n-1\}$ and
$s(\sigma_{i})=r(\sigma_{i+1})$ for all $i\in\{0,\dots,n-2\}$. Note
that we adopt the recent convention from the theory of graph
$C^*$-algebras when we define a path in a graph; namely, we switch the
traditional role of $r$ and $s$ (see, for example, \cite{Rae_CBMS05} 
for more details about this). We
write $E^{n}$ for the set of paths of length $n$ and $E^{*}=\bigcup_{n}E^{n}$
for the set of \emph{finite paths} in the graph $E$. We extend the
definition of $r$ and $s$ to $E^{*}$ via $r(\sigma)=r(\sigma_{0})$
and $s(\sigma)=s(\sigma_{n-1})$ if $\sigma\in E^{n}$. The \emph{infinite
path space} $X$ is defined via\[
X:=E^{\infty}:=\{(x_{n})_{n\in\mathbb{N}}\,:\, x_{n}\in E^{1}\text{ and }s(x_{n})=r(x_{n+1})\}.\]
For an element $x\in E^{\infty}$ we set $r(x)=r(x_{0})$. We say
that the graph $E$ is \emph{irreducible }(or strongly connected)
if for every two elements $v,w\in E^{0}$ there is a path $\sigma\in E^{*}$
such that $r(\sigma)=v$ and $s(\sigma)=w$. For $v\in E^0$ we write
$vE^\infty$ for the set of infinite sequences $x\in X$ such that
$r(x)=v$. Similarly  one can define $vE^k$ and $vE^kw$. 

If $s$ is a positive real number, then $s$-dimensional \emph{Perron
numbers} for the graph $E$ are positive numbers $q_{v}$, one for
each vertex $v\in E^{0}$, such that\begin{equation}
q_{v}^{s}=\sum_{w\in E^{0},e\in vE^{1}w}r(e)^{s}q_{w}^{s}\label{eq:Hausdorff_graph}\end{equation}
for all $v\in E^{0}$ (see, for example, \cite[Section 6.6]{Edg_MTFG2}). For an irreducible
graph $E$ there is a unique number $s\ge0$ such that Perron numbers
exist (\cite[Theorem 6.9.6]{Edg_MTFG2}).

If $\sigma\in E^*$ then the cylinder $Z(\sigma)$ is
\[
Z(\sigma)=\{x\in E^{\infty}\,:\, x_{i}=\sigma_{i},i=0,\dots,n-1\}.\]
Using \cite[Proposition 2.6.5]{Edg_MTFG2}, one can show that there is
a metric $\rho$ on $X$ such that the diameter of $Z(\sigma)$ equals
$r_{\sigma}q_{s(\sigma)}$, for all $\sigma\in E^*$. It follows that
the metric $\rho$ satisfies the property
\[
\rho(ex,ey)=r_{e}\rho(x,y)\text{ for all }x,y\in s(e)E^{\infty}.\]
Provided that the number $s$ which satisfies condition \eqref{eq:Hausdorff_graph}
is positive, the Hausdorff dimension $\beta$ of $X$ equals $s$
and the Hausdorff measure on $X$ is the unique Borel measure such
that $\mu^{\beta}(Z(\sigma))=q_{s(\sigma)}^{\beta}r_{\sigma}^{\beta}$
for all $\sigma\in E^{*}$ (\cite[Section 6.6]{Edg_MTFG2}).

The left shift $T:X\to X$ defined by $T(x_{n})=(x_{n+1})$ is a local
homeomorphism on $X$.  We assume that the graph
$E$ is irreducible and it satisfies condition (L) from \cite{KuPaRa_PJM98}
(or, equivalently since $E$ is finite, condition (I) from \cite{Cu_Kr_80}).
That is, we assume that every loop in $E$ has an exit. Under these
assumptions
if $G$ is the  Renault-Deaconu groupoid
associated to the pair $(X,T)$,
then $C^{*}(G)$ is isomorphic to the graph $C^{*}$-algebra $C^{*}(E)$
(\cite{KuPaRaRe_JFA97},\cite{KuPaRa_PJM98}; see also \cite{Rae_CBMS05}).
The local homeomorphism $T$ satisfies the local scaling property
with\[
\lim_{y\to x}\frac{\rho(Tx,Ty)}{\rho(x,y)}=\frac{1}{r_{x_{0}}},\]
that is, $\varphi(x)=-\log r_{x_{0}}$. The corresponding automorphism
of $C^{*}(G)$ is defined then via\[
\alpha_{t}(P_{v})=P_{v}\;\text{for all }v\in E^{0}\]
and\[
\alpha_{t}(S_{e})=e^{-it\log r_{e}}S_{e},\]
where $(\{P_{v}\}_{v\in E^{0}},\{S_{e}\}_{e\in E^{1}}\}$ is a \emph{universal}
Cuntz-Krieger family generating $C^{*}(G)$ (\cite{KuPaRa_PJM98,Rae_CBMS05}).

Theorem \ref{thm:Main} and \ref{pro:uniqueness} imply that there
exists a KMS-state for $\alpha$ at inverse temperature $\beta$ if
and only if $\beta$ is the unique positive number for which Perron
numbers exist. In this case, there is a unique $(\alpha,\beta)$-KMS
state, namely\[
\omega_{\mu}(f)=\int E(f)d\mu,\]
where $\mu$ is the normalized Hausdorff measure on $X$ and $E$ is the
canonical expectation onto $C(X)$. 

If $r_{e}=r\in(0,1)$ for all $e\in E^{1}$, then the Hausdorff dimension
satisfies $\lambda=r^{-\beta}$, where $\lambda$ is the Perron-Frobenius
eigenvalue of the vertex matrix $A_{E}$ of the graph $E$ (see, for
example, \cite[Theorem 6.9.6]{Edg_MTFG2}). Proposition
\ref{pro:Entropy} implies that the topological entropy of $T$ is
$\log\lambda$.

\subsection{Generalized gauge actions on graph $C^{*}$-algebras}

The construction of Example \ref{sub:Generalized-gauge-actions-Cuntz}
extends to the graph $C^{*}$-algebras described in the previous example.
Suppose that $X$ is the infinite path space of a finite directed
graph $E=(E^{0},E^{1},r,s)$ which is irreducible and satisfies condition
(L) of \cite{Ku_Re_PAMS06}. We endow $X$ with the product topology
so that it is a compact topological space and we let $T$ be the left
shift map on $X$ as in the previous example. Fix a continuous function $f$ on $X$ such that
$f(x)>1$ for all $x\in X$ and fix a list of positive number
$\{q_v\}_{v\in E^0}$ such that
\begin{equation}
  \label{eq:5}
  f(x)>\frac{q_{s(x_0)}}{q_{r(x_0)}}\;\text{ for all}\;x\in E^\infty.
\end{equation}
For example, if $q_v=1$ for all $v\in V$ then the above condition is
trivially satisfied. We define  the diameter of  a cylinder
$Z(\sigma)$, $\sigma\in vE^{*}$, to be $w_\sigma\cdot
q_{s(\sigma)}$, where 
\begin{equation}\label{eq:wsigma_graph}
w_{\sigma}:=\max_{z,w\in Z(\sigma)}\left(\prod_{i=0}^{\vert\sigma\vert-1}f(T^{i}z)\cdot\prod_{i=0}^{\vert\sigma\vert-1}f(T^{i}w)\right)^{-1/2}.
\end{equation}
Since $f(x)>1$ and the numbers $q_v$ satisfy (\ref{eq:5}), it follows
that if $\alpha,\beta\in E^*$ are such that $\alpha<\beta$  then $\diam
Z(\alpha)<\diam Z(\beta)$ and $\lim_{n\to \infty}\diam Z(\sigma|_n)=0$
for all $\sigma\in E^{\infty}$.  Then \cite[Proposition
2.6.5]{Edg_MTFG2} implies  that there
is a metric $\rho_{f}$ on $vX$ such
that the diameter of $Z(\sigma)$ with respect to $\rho_f$ equals  $w_\sigma\cdot
q_{s(\sigma)}$ for all $\sigma\in vE^\infty$, for each
$v\in E^{0}$. Namely, if 
$x,y\in vE^\infty$ and $\sigma\in E^*$ is their longest common prefix,
then
\[
\rho_f(x,y)=w_\sigma\cdot q_{s(\sigma)}.
\]
Since
$X$ is the finite disjoint union of $vX$ we can extend $\rho_{f}$
to a metric on $X$. 

If $f(x)=1/r_{x_{0}}$ for a contractive ratio
list $\{r_{e}\}_{e\in E^{1}}$ and $\{q_v\}_{v\in E^{0}}$ are $s$-dimensional
Perron numbers for the graph $E$ then the condition (\ref{eq:5}) is
implied by (\ref{eq:Hausdorff_graph}).  Thus we recover the metric
from the previous example. 

One can easily see that if $x$ and $y$ have a common prefix $\sigma$
of length at least $2$ then
\[
\frac{\rho(Tx,Ty)}{\rho(x,y)}=\frac{w_{\sigma^\prime}}{w_\sigma},
\] because $s(\sigma^\prime)=s(\sigma)$, where $\sigma^{\prime}$ is
obtained from $\sigma$ by removing the first entry
($\sigma^\prime=\sigma_1\dots \sigma_{\vert \sigma\vert -1}$). Then the analogue 
of Lemma \ref{lem:generalized_metric} holds and  $T$ satisfies the local
scaling property  (by Proposition \ref{rem:equivdef}) with respect to 
the metric $\rho_{f}$ with\[
\lim_{y\to x}\frac{\rho(Tx,Ty)}{\rho(x,y)}=f(x),\]
for all $x\in X$, and condition (\ref{eq:0}) holds with
$\varphi =\log f$. Assuming that the graph $E$ is irreducible and
  satisfies condition [L] of \cite{KuPaRa_PJM98} , the $C^*$-algebra $C^*(G)$ of the
  Renault-Deaconu groupoid $G$ associated to $(X,T)$ is still isomorphic
  to the graph $C^*$-algebra $C^*(E)$. The associated 
action $\alpha_{t}$ on $C^{*}(G)$ is given via\[
\alpha_{t}(a)(x,m-n,y)=e^{itc_{f}(x,m-n,y)}a(x,m-n,y),\]
for all $(x,m-n,y)\in G$ and $a\in C_c(G)$, where the cocycle $c_{f}$
is defined as in \eqref{eq:cocycle_gen}. These actions generalize
the gauge actions described in the previous example. Indeed, a universal
Cuntz-Krieger family on $C^{*}(E)$ is given by $\bigl(\{P_v\}_{v\in E^{0}},\{S_e\}_{e\in E^{1}}\bigr)$,
where $P_v=1_{\Delta_{v}}$ and $S_e=1_{\Gamma_{e}}$, and \[
\Delta_{v}=\{(x,0,x)\in G^{0}\,:\: r(x)=v\}\]
and\[
\Gamma_{e}=\{(ex,1,x)\in G\,:\, x\in s(e)E^{\infty}\}.\]
Then an easy computation shows that\begin{eqnarray*}
e^{itc_{f}(x,m-n,y)}1_{\Delta_{v}}(x,m-n,y) & = & 1_{\Delta_{v}}(x,m-n,y),\;\text{and}\\
e^{itc_{f}(x,m-n,y)}1_{\Gamma_{e}}(x,m-n,y) & = & e^{it\log r_{e}}1_{\Gamma_{e}}(x,m-n,y).\end{eqnarray*}

Let $\beta$ be the Hausdorff dimension of $X$ with respect to the
metric $\rho_{f}$ and assume that $0<\beta<\infty$. Theorem \ref{thm:Main}
implies that there exists a KMS state $\omega_{\mu}$ on the graph
$C^{*}$-algebra $C^{*}(E)$ for $\alpha$ at inverse temperature
$\beta$. Moreover, if $\log f$ satisfies the Bowen condition, then
$\beta$ is the unique inverse temperature for which KMS states exist
and $\omega_{\mu}$ is the unique $(\alpha,\beta)$-KMS state. The
Hausdorff dimension $\beta$ and the measure $\mu$ are the unique
solutions of the equation $\mathcal{L}_{f,\beta}^{*}(\mu)=\mu$ (\cite{Ren_ETDN_05,Ku_Re_PAMS06}),
where\[
\mathcal{L}_{f,\beta}(a)(x)=\sum_{e\in E^{1},s(e)=r(x)}f(ex)^{-\beta}a(ex).\]
We recover, thus, some of the results of \cite{Exe_BBMS04}.

\subsection{Coverings of $\mathbb{T}$}

Let $f:[0,1]\to\mathbb{R}$ be a continuous  positive function such
that $f(0)=f(1)$ 
and $n:=\int_{0}^{1}f(t)dt$ is a positive integer such that $n\ge2$.
For example, the constant function $n$ satisfies these requirements.
Let $X=\mathbb{T}=\mathbb{R}/\mathbb{Z}$ and let $\rho$ the induced metric
on $X$. We define $T:X\to X$ by\[
T(x)=\int_{0}^{x}f(t)dt.\]
Then $T$ is a local homeomorphism. Moreover, $T$ is an $n$-fold
covering map. One can easily compute that\[
\lim_{y\to x}\frac{\rho(Tx,Ty)}{\rho(x,y)}=f(x).\]
Thus $T$ satisfies the local scaling condition and $\varphi(x)=\log f(x)$.
Since the Hausdorff dimension $\beta=1$, Theorem \ref{thm:Main}
implies that $\omega_{\mu}$ is an $(\alpha,1)$-KMS state, where
$\mu=\mu^{1}$. If $f$ is continuously differentiable and $f(x)>1$,
then $\varphi=\log f$ is both positive and H\"older. Thus $c_{\varphi}^{-1}(0)$
is principal and $\varphi$ satisfies the Bowen condition. Moreover,
$T$ is positively expansive and exact. Therefore, by Proposition
\ref{pro:uniqueness}, $\omega_{\mu}$ is the unique KMS state on
$C^{*}(G)$.

Suppose now that $f(x)=n$ for all $x\in[0,1]$. Then $f$ is continuously
differentiable and $f(x)>1$. Moreover, Proposition \ref{pro:Entropy}
implies that $h(T)=\log n$.

\subsection{The Sierpinski octafold}

Consider the triangle $Y$ with vertices $v_{1}=(1,0,0)$, $v_{2}=(0,1,0)$,
and $v_{3}=(0,0,1)$ in $\mathbb{R}^{3}$. Let $f_{1},f_{2},f_{3}$
be the the restrictions to $Y$ of the linear maps defined by the 
matrices
\[
A_{1}=\left[\begin{array}{ccc}
1 & 1/2 & 1/2\\
0 & 1/2 & 0\\
0 & 0 & 1/2\end{array}\right],\, A_{2}=\left[\begin{array}{ccc}
1/2 & 0 & 0\\
1/2 & 1 & 1/2\\
0 & 0 & 1/2\end{array}\right],\, A_{3}=\left[\begin{array}{ccc}
1/2 & 0 & 0\\
0 & 1/2 & 0\\
1/2 & 1/2 & 1\end{array}\right].
\]
Note that for $i = 1, 2, 3$, we have $f_i(Y) \subset Y$  and
 $f_{i}$ is a similarity with ratio $1/2$.
So $(f_{1},f_{2},f_{3})$ is an iterated function system (\cite{Hut_81})
and it admits a unique invariant set $K$ (\cite{Hut_81,Edg_MTFG2})\[
K=f_{1}(K)\bigcup f_{2}(K)\bigcup f_{3}(K).\]
The set $K$ is a copy of the Sierpinski gasket. Using the terminology from
\cite{Kig_CUP01} and \cite{Str_Prin06}, we call the sets $f_{i}(K)$,
$i=1,2,3$, $1$-cells.

Now consider the octahedron in $\mathbb{R}^{3}$ with vertices at
$(\pm 1, 0, 0)$, $(0, \pm 1,  0)$ and $(0, 0, \pm 1)$; note that each face
is isometric to $Y$ and $Y$ is one of the faces. We consider four copies
of the Sierpinski gasket $K$  (one of which is $K$ itself)
on alternating faces of the octahedron; so any two of the Sierpinski 
gaskets intersect only at a single point. 
Let $X$ be the union of the four Sierpinski gaskets.
Note that each point in $X$ has a neighborhood which is similar to
a neighborhood of the Sierpinski gasket. Thus $X$ is what is called
in \cite{Str_TAMS03} a \emph{fractafold} (see also \cite{Str_Prin06} and
\cite{Str_CJM98}). We call $X$ the \emph{Sierpinski octafold}.
One can also think of the octafold as four copies of the Sierpinski
gasket such that any two of them are glued at one of the vertices
as in Figure \ref{Flo:figure}, where the dotted curves mean that the
two endpoints of the curve are identified.

\begin{figure}
\caption{The Sierpinski octafold}
\label{Flo:figure}

\includegraphics[scale=0.5]{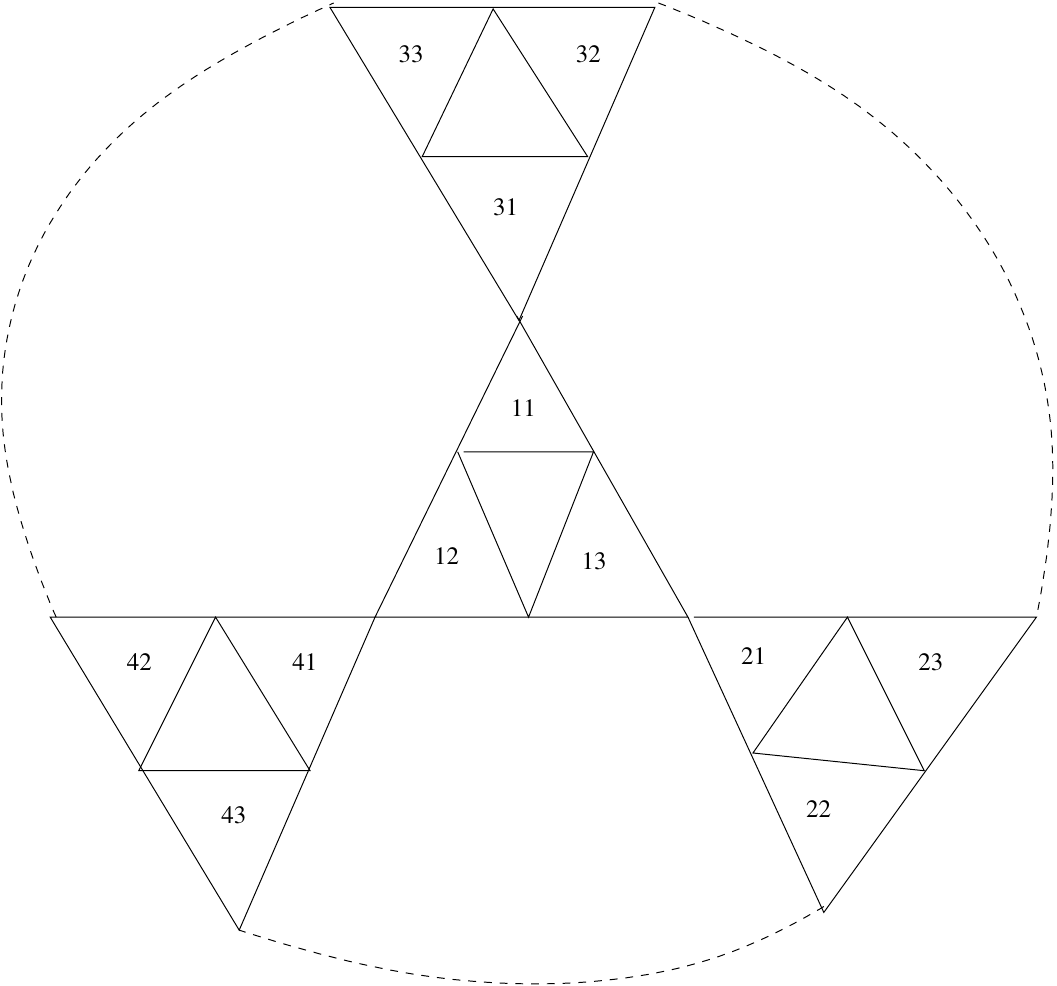}

\end{figure}
The metric $\rho$ on $X$ is the restriction of the Euclidean metric
to $X$. Therefore the Hausdorff dimension of $K$ is
  $\log3/\log2$ (see, for example, \cite[Theorem
6.5.4]{Edg_MTFG2}). Since $X$ is the union of four copies of
$K$ and the Hausdorff dimension of a union of sets equals the maximum of the
Hausdorff dimension of each of the sets (\cite[Theorem
6.1.7]{Edg_MTFG2}) it follows that the Hausdorff dimension of $X$
is also $\log3/\log2$.
To describe the local homeomorphism, label the four copies of the
Sierpinski gasket, which we will call $0$-cells, as $1,2,3,$ and
$4$. Let $11,12,13$ be the three $1$-cells of the $0$-cell $1$,
and similarly for the other three $0$-cells (see Figure \ref{Flo:figure}).
The local homeomorphism $T$ we define is uniquely determined by the
following two properties
\begin{enumerate}
\item The vertices of the $0$-cells are left fixed.
\item The restriction to each $1$-cell defines an affine homeomorphism
onto the $0$-cell that shares exactly one vertex with the original
$1$-cell.
\end{enumerate}
%
{}It is enough to indicate where the points that are the vertices of
the $1$-cells and are not vertices of the $0$-cells map. We call
these points \emph{midpoints}. Each midpoint may be written $.5(\epsilon_{1}e_{\sigma(1)}+\epsilon_{2}e_{\sigma(2)})$
where $\epsilon_{i}=\pm1$, $e_{i}$ is a canonical basis element
of $\mathbb{R}^{3}$ and $\sigma$ is in the permutation group $S_{3}$.
Then \[
T\bigl(.5(\epsilon_{1}e_{\sigma(1)}+\epsilon_{2}e_{\sigma(2)})\bigr)=-\epsilon_{1}\epsilon_{2}e_{\sigma(3)}.\]
It is easy to see that this is well defined and $T$ is a local
homeomorphism
such that
\[
\lim_{y\to x}\frac{\rho(Tx,Ty)}{\rho(x,y)}=2.\]
It does not, however, satisfy condition (\ref{eq:0}) of Proposition 
\ref{rem:equivdef} and, thus, it does not satisfy the local scaling
condition. To see this, consider a point $y$ in the $1$-cell $11$ and
another point $z$ in the $1$-cell $12$ such that they lie on the line
segments, that form an angle of $\pi/3$ and intersect at the midpoint 
$x$ (where  the two $1$-cells meet).   We may also assume  that they 
are equidistant from the midpoint $x$ (see Figure \ref{Flo:figure}).
Their images lie on adjacent edges of a square, so the Pythagorean Theorem 
implies that $\rho(Ty, Tz)=2\sqrt{2}\rho(y,z)$, while $\rho(Tx,Ty)=2\rho(x,y)$ 
and $\rho(Tx,Tz)=2\rho(x,z)$. Since we can take $y$ and $z$ as close as we
want to $x$, it follows that $T$ does not satisfy the local scaling
property. 

We claim, however, that the conclusion of Theorem \ref{thm:Main} is still valid 
for this example with $\varphi=\log 2$. Since the proofs of Proposition 
\ref{pro:uniqueness} and Proposition \ref{pro:Entropy} do not depend on 
the local scaling property and only on the existence of the KMS-state 
$\omega_\mu$, it follows that they are also valid, once we prove the claim. 

Let $G$ be the associated Renault-Deaconu groupoid. The action on
$C^{*}(G)$ is given via\[
\alpha_{t}(f)(x,m-n,y)=e^{-it(m-n)\log2}f(x,m-n,y).\]
Recall from \cite[Proposition
II.5.4]{Ren_LNM793} that in order for the Hausdorff measure $\mu$ to be a quasi-invariant
measure for $G$ the equality (\ref{eq:RN}) need only hold 
almost everywhere. One can modify the proof of Theorem \ref{thm:Main}
to show that this is the case for this example. Let $C$ be any of the $1$-cells of the
octafold. Then the restriction of $T$ to $C$ is a similarity with ratio
$2$. Theorem 6.1.9 of \cite{Edg_MTFG2} implies that $\mu(TU)=2^\beta\mu(U)$ for
all subsets of $C$, where $\beta=\log 3/\log 2$ is the Hausdorff dimension. In
particular the equality is true for all open sections $U$ of $C$. 
It follows
that the equation (\ref{eq:RN}) is true with $\varphi =\log 2$ for all points 
except possibly the 
midpoints and the conclusion of Theorem \ref{thm:Main} is true
for the octafold.

Thus,  $\beta=\log3/\log2$
is the unique inverse temperature which admits a KMS state on $C^{*}(G)$
for $\alpha$, the $(\alpha,\beta)$-KMS state $\omega_{\mu}$ given
by \eqref{eq:KMSstate} is unique, and the topological entropy is $h(T)=\log3$.

\bibliographystyle{amsplain}
\bibliography{bib_hmkms}

\end{document}